\newtheorem{thm}{Theorem}[section]
\newtheorem{prop}[thm]{Proposition}
\newtheorem{df}[thm]{Definition}
\newtheorem{lem}[thm]{Lemma}
\newtheorem{ex}[thm]{Example}
\newtheorem{rem}[thm]{Remark}
\def\logf{\log\frac{1}{\epsilon}}
\numberwithin{equation}{section}
\title{Variational principle for neutralized  Bowen topological entropy}
\author{Rui Yang, Ercai Chen and Xiaoyao Zhou*
}
\address
{1.School of Mathematical Sciences and Institute of Mathematics, Nanjing Normal University, Nanjing 210023, Jiangsu, P.R.China}
\email{zkyangrui2015@163.com}
\email{ecchen@njnu.edu.cn}
\email{zhouxiaoyaodeyouxian@126.com}
\date{}
\begin{document}

\maketitle

\renewcommand{\thefootnote}{}
\footnote{2020 \emph{Mathematics Subject Classification}:    37A15, 37C45.}
\footnotetext{\emph{Key words and phrases}: Neutralized  Bowen topological entropy; Neutralized  Brin-Katok local entropy; Neutralized  Katok's  entropy; Variational principle; }
\footnote{*corresponding author}

\begin{abstract}
Ovadia and Rodriguez-Hertz defined   neutralized Bowen open ball as
$$B_n(x,e^{-n\epsilon})=\{y\in X: d(T^jx, T^jy)<e^{-n\epsilon}, \forall 0\leq j\leq n-1\}.$$
We introduce the notion of  neutralized  Bowen topological entropy of subsets  by neutralized Bowen open ball, and establish variational principles  for  neutralized  Bowen topological entropy of compact subsets in terms of  neutralized  Brin-Katok local entropy and neutralized  Katok's  entropy.
\end{abstract}

\section{Introduction}

By a pair  $(X,T)$ we mean a topological dynamical system (TDS for short), where $X$ is a compact  metric space  with  a metric $d$ and   $T$ is a continuous self-map  on $X$.
By $\mathcal{M}(X)$  we   denote  the set of all Borel probability measures on $X$. 

The topological entropy introduced by  Adler,  Konheim  and   McAndrew \cite{akm65} is  a vital topological  invariant to capture the topological complexity of dynamical systems. Later, Bowen \cite{b71} gave equivalent definitions  for topological entropy by  spanning sets and separated sets.  From the viewpoint of qualitative theory, entropy reflects the  exponential growth rate  of   the number of orbits segments up to a fixed   accuracy  that  can be  distinguished     as the time evolves. In 1973,  Bowen \cite{b73} in his profound paper  further revealed  that entropy   essentially can  be  viewed as  the analogues of dimension from the viewpoint of dimension theory.  Until now, topological entropy is an  important tool to understand the dynamics of  systems, which  plays a key role in  topological dynamics, ergodic theory, multifractal analysis, mean dimension theory and  other fields of dynamical systems.

 The present paper focus on  the interplay between ergodic theory and topological entropy. It is well-known that the classical variational principle \cite{w82} for  topological entropy  provides a  bridge between ergodic theory and topological dynamics. Namely,
$$h_{top}(T,X)=\sup_{\mu \in \mathcal{M}(X,T)}h_{\mu}(T),$$
where $h_{top}(T,X)$ denotes the topological entropy of $X$, and $h_{\mu}(T)$ is the Kolmogorov-Sinai  entropy of  invariant measure $\mu$. In 2012, Feng and Huang  \cite{fh12} showed that  if  the  compact subset is not invariant, then  such a  variational principle still exists for Bowen topological entropy of compact subsets in terms of  lower Brin-Katok local entropy. Precisely,

$$h_{top}^B(T,K)=\sup\{\underline{h}_{\mu}^{BK}(T): \mu \in \mathcal{M}(X), \mu(K)=1\},$$
where $h_{top}^B(T,K)$  denotes the Bowen topological entropy of compact subset $K$, and  $\underline{h}_{\mu}^{BK}(T)$ is the  lower Brin-Katok  local entropy of $\mu$. The variational principles  on subsets of non-autonomous systems \cite{xz18},   amenable groups \cite{zc16,hlz20}, free semi-groups \cite{zc21,xm22}, fixed-point free flows \cite{dfq17}, partially hyperbolic systems \cite{tw22,ljz22},  were exhibited in the setting of  different  types of  dynamical systems.  Ovadia and Rodriguez-Hertz \cite{orh23} defined the  neutralized Bowen open ball as
$$B_n(x,e^{-n\epsilon})=\{y\in X: d(T^jx, T^jy)<e^{-n\epsilon}, \forall 0\leq j\leq n-1\}.$$
As the Brin-Katok entropy formula shown,  Bowen balls  $\{B_n(x,\epsilon)\}_{n\geq 0}$ allow us to  control their measure and  image under the dynamics for  iterations,  while it may  possess   complicated geometric shape  in a central direction.  Using  neutralized Bowen open balls,  Ovadia and Rodriguez-Hertz   introduced  neutralized Brin-Katok local entropy to   estimate the asymptotic measure of sets with a distinctive geometric shape by  neutralizing the sub-exponential effects, which turns out that such open  balls  have more advantages for describing the  neighborhood with a local linearization of the dynamics.

Notice that the  classical Bowen topological entropy \cite{b73,p97,fh12} is  only defined by the  Bowen balls whose radius is   fixed.   This  invokes a natural question is  whether  the aforementioned variational  principle  still holds  for neutralized  Bowen topological entropy defined by  neutralized Bowen  balls.  To this end, we introduce two new notions, called  neutralized  Brin-Katok local entropy and neutralized  Katok's  entropy of Borel probability measure,  as the  counterpart of the role  of measure-theoretic entropy played in ergodic theory. Borrowing   some ideas from geometric measure theory and  utilizing  dimensional approach,  we  obtain an analogous variational principle. Surprisingly,  the form of variational principle  is different from the  classical variational principles for topological entropy  in terms of Kolmogorov-Sinai entropy \cite{w82},  and Bowen topological entropy of compact subsets in terms of lower Brin-Katok local entropy \cite{fh12},  which is  more close to the Lindenstrauss-Tsukamoto's  variational principle \cite[Theorems 6 and 9]{lt18} for metric mean dimension in terms of rate-distortion functions. Precisely,  we state it as follows. 
\begin{thm}\label{thm 1.1}
Let $(X,T)$ be a TDS   and  $K$ be a non-empty compact subset of $X$.  Then 

\begin{align*}
h_{top}^{\widetilde{B}}(T,K)&=\lim_{\epsilon \to 0}\sup\{\underline{h}_{\mu}^{\widetilde{BK}}(T,\epsilon):\mu\in \mathcal{M}(X),\mu(K)=1\},\\
&=\lim_{\epsilon \to 0}\sup\{h_{\mu}^{\widetilde{K}}(T,\epsilon):\mu\in \mathcal{M}(X),\mu(K)=1\},
\end{align*}
where  $h_{top}^{\widetilde{B}}(T,K)$  denotes   neutralized  Bowen topological entropy of $K$, and  $\underline{h}_{\mu}^{\widetilde{BK}}(T,\epsilon), h_{\mu}^{\widetilde{K}}(T,\epsilon)$  are    lower  neutralized Brin-Katok local entropy of $\mu$,   neutralized Katok entropy of $\mu$, respectively.
\end{thm}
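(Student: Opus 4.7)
The plan is to follow the dimension-theoretic template developed by Feng--Huang \cite{fh12} for Bowen topological entropy, but to carry each fixed-$\epsilon$ estimate through the Carath\'eodory outer measure built from neutralized balls $B_n(x,e^{-n\epsilon})$ and to let $\epsilon\to 0$ only at the very end, in the spirit of the Lindenstrauss--Tsukamoto variational principle \cite{lt18} for metric mean dimension. Concretely, I would aim to establish the sandwich
\begin{align*}
\sup_{\mu(K)=1} h_{\mu}^{\widetilde{K}}(T,\epsilon) \;\leq\; h_{top}^{\widetilde{B}}(T,K) \;\leq\; \sup_{\mu(K)=1} \underline{h}_{\mu}^{\widetilde{BK}}(T,\epsilon)
\end{align*}
for every small $\epsilon>0$, together with the elementary pointwise inequality $h_{\mu}^{\widetilde{K}}(T,\epsilon) \leq \underline{h}_{\mu}^{\widetilde{BK}}(T,\epsilon)$ coming from a Chebyshev argument on the Brin--Katok limit inferior. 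Passing to the limit $\epsilon\to 0$ then trivially forces the three quantities to coincide and yields both equalities of Theorem \ref{thm 1.1} at once.

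For the easier direction $h_{top}^{\widetilde{B}}(T,K) \geq \sup_{\mu} \underline{h}_{\mu}^{\widetilde{BK}}(T,\epsilon)$ I would prove a mass distribution principle tailored to neutralized balls. Given $\mu$ with $\mu(K)=1$ and any $0<s<\underline{h}_{\mu}^{\widetilde{BK}}(T,\epsilon)$, Egorov's theorem produces a compact $K_0\subset K$ of positive $\mu$-measure and an integer $N$ such that $\mu(B_n(x,e^{-n\epsilon}))\le e^{-ns}$ for every $x\in K_0$ and $n\geq N$; summing over any countable cover of $K_0$ by neutralized Bowen balls of lengths $n_i\geq N$, after a Besicovitch-style recentering of balls that meet $K_0$, forces $\sum_i e^{-sn_i}\geq \mu(K_0)>0$, so the defining outer measure of $h_{top}^{\widetilde{B}}(T,K)$ at level $s$ stays positive and $h_{top}^{\widetilde{B}}(T,K)\geq s$.

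For the harder upper bound $h_{top}^{\widetilde{B}}(T,K)\leq \sup_{\mu} h_{\mu}^{\widetilde{K}}(T,\epsilon)$ I would adapt the Howroyd-type Frostman construction used in \cite{fh12}. Fixing $s<h_{top}^{\widetilde{B}}(T,K)$, one introduces auxiliary set functions $M_{N,\epsilon}^s(\cdot)$ obtained by minimising $\sum_i e^{-sn_i}$ over all covers by neutralized balls $B_{n_i}(x_i,e^{-n_i\epsilon})$ with $n_i\geq N$; after verifying the outer-measure axioms, a min-max / weak-$\ast$ compactness argument produces a Borel probability $\nu$ concentrated on a compact subset of $K$ satisfying a uniform Frostman bound $\nu(B_n(x,e^{-n\epsilon}))\leq C\,e^{-ns}$ for all $x\in\mathrm{supp}\,\nu$ and $n\geq N$. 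Any $(n,e^{-n\epsilon})$-spanning set with $\nu$-mass at least $1-\gamma$ must then have cardinality at least $(1-\gamma)C^{-1}e^{ns}$, which translates directly into $h_{\nu}^{\widetilde{K}}(T,\epsilon)\geq s$.

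The main obstacle will be the Frostman step. In the classical Bowen setting the radius $\epsilon$ is constant, so the nested inclusions $B_{n+1}(x,\epsilon)\subset B_n(x,\epsilon)$ and the clean behaviour of intersections $B_n(x,\epsilon)\cap B_m(y,\epsilon)$ underpin the countable subadditivity of $M_{N,\epsilon}^s$. For neutralized balls the radius $e^{-n\epsilon}$ itself depends on $n$, so monotonicity in $n$ fails and many of the elementary comparisons must be replaced by arguments that pass to an auxiliary scale $e^{-n(\epsilon\pm\delta)}$ and absorb the discrepancy into a multiplicative factor $e^{\delta n}$, which disappears after the final $\epsilon\to 0$ limit. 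Once these technical modifications to the outer-measure framework are in place, combining the two directions with the pointwise comparison between neutralized Katok and Brin--Katok entropies concludes the proof of Theorem \ref{thm 1.1}.
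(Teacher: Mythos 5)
Your framework (mass distribution for one inequality, Frostman for the other, a pointwise comparison between the neutralized Katok and Brin--Katok entropies to close the loop) mirrors the paper's, but the pointwise comparison you invoke is stated in the wrong direction and is false in general. You claim $h_\mu^{\widetilde{K}}(T,\epsilon) \leq \underline{h}_\mu^{\widetilde{BK}}(T,\epsilon)$ via a Chebyshev argument. The Chebyshev argument in fact yields the reverse: if the Brin--Katok $\liminf$ exceeds $s$ on a set $E$ of positive $\mu$-measure, then neutralized balls touching $E$ carry $\mu$-mass at most roughly $e^{-ns}$, so any family covering $\mu$-mass $>1-\mu(E)/2$ has Carath\'eodory sum bounded below by $\mu(E)/2$, giving a \emph{lower} bound on $h_\mu^{\widetilde{K}}$, not an upper bound. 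This is precisely the paper's Proposition 2.5, $\underline{h}_{\mu}^{\widetilde{BK}}(T, \epsilon/2)\leq h_{\mu}^{\widetilde{K}}(T,\epsilon)$. Your direction actually fails: for a non-ergodic mixture $\mu=\tfrac12(\mu_1+\mu_2)$ with $\mu_1$ low local entropy and $\mu_2$ high local entropy, $\underline{h}_\mu^{\widetilde{BK}}$ averages the two exponents while $h_\mu^{\widetilde{K}}$ (for $\delta<1/2$) is governed by the maximum. With the inequality corrected, the chain you build from your paragraphs~2 and~3 (namely $h_{top}^{\widetilde{B}}\le\sup h^{\widetilde{K}}\le\sup\underline{h}^{\widetilde{BK}}\le h_{top}^{\widetilde{B}}$) no longer closes.

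The repair is to extract the Brin--Katok lower bound directly from the Frostman measure rather than routing through the Katok entropy: the measure $\nu$ produced by the Frostman lemma satisfies $\nu(B_n(x,e^{-2n\epsilon}))\le c^{-1}e^{-ns}$ for all $x$ and $n\ge N$, so $\liminf_n -\tfrac1n\log\nu(B_n(x,e^{-2n\epsilon}))\ge s$ pointwise and hence $\underline{h}_\nu^{\widetilde{BK}}(T,2\epsilon)\ge s$ immediately. Combined with the trivial bound $h_\mu^{\widetilde{K}}(T,\epsilon)\le M_\epsilon(K)$ for any $\mu$ with $\mu(K)=1$ (a cover of $K$ is automatically $\mu$-full) and with Proposition 2.5, one gets the paper's sandwich $\sup\underline{h}^{\widetilde{BK}}(T,\epsilon/2)\le\sup h^{\widetilde{K}}(T,\epsilon)\le M_\epsilon(K)\le\sup\underline{h}^{\widetilde{BK}}(T,2\epsilon)$, and $\epsilon\to 0$ yields both equalities simultaneously. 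A secondary remark: the technical obstacle you flag (the $n$-dependent radius breaking nested inclusions) is not where the difficulty lies; the paper notes that the Feng--Huang Frostman lemma transfers verbatim, and the genuinely new step is the equivalence of $M_\epsilon$ with a weighted outer measure (Proposition 3.3), established via a $5r$-covering lemma, which is what makes the Frostman construction available in the neutralized setting.
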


The rest of this paper is organized as follows. In section 2,  we introduce the notions of  neutralized  Bowen topological entropy of subsets, neutralized  Brin-Katok local entropy, and neutralized  Katok's  entropy of Borel probability measures.   In  section 3, we give  the proof of Theorem  1.1.

\section{Preliminary}
In this section, we introduce the notions of  neutralized  Bowen topological entropy of subsets by using two different approaches,  and define neutralized  Brin-Katok local entropy and neutralized  Katok's  entropy of Borel probability measures. 
\subsection{Neutralized topological entropies of subsets}

Given   $n\in \mathbb{N}$, $x,y \in X$, the $n$-th Bowen metric $d_n$  on $X$ is defined by  $$d_n(x,y):=\max_{0\leq j\leq n-1}\limits d(T^{j}(x),T^j(y)).$$ Then  \emph{Bowen open  ball} of radius $\epsilon$ and  order $n$ in the metric $d_n$ around $x$ is   given by 
$$B_n(x,\epsilon)=\{y\in X: d_n(x,y)<\epsilon\}.$$

Recall that in  \cite{orh23}  the authors  defined  neutralized Bowen open ball by replacing  the radius $\epsilon$ in  usual Bowen ball  by $e^{-n\epsilon}$. Precisely,   the  \emph{neutralized Bowen open  ball} of radius $\epsilon$ and  order $n$ in the metric $d_n$ around $x$  is given by  
$$B_n(x, e^{-n\epsilon})=\{y\in X: d_n(x,y)<e^{-n\epsilon}\}.$$

Following the idea of \cite{b73,p97,fh12},  using neutralized Bowen balls we define the so-called \emph{neutralized Bowen topological entropy} of subset  which is not necessarily compact or $T$-invariant.

Let  $Z\subset X$ be a non-empty subset, $\epsilon>0$, $N\in \mathbb{N}$ and $s \geq 0$.  
Put
$$M_{N,\epsilon}^s(Z)=\inf\sum_{i\in I}\limits  e^{-n_i s},$$
where the infimum  is taken over all  finite or countable covers $\{B_{n_i}(x_i,e^{-n_i\epsilon})\}_{i\in I}$ of $Z$ with $n_i \geq N,x_i \in X.$

Clearly,  the limit $M_{\epsilon}^s(Z)=\lim\limits_{N\to \infty}M_{N,\epsilon}^s(Z)$ exists since  $M_{N,\epsilon}^s(Z)$ is non-decreasing when $N$ increases.

\begin{prop}\label{prop 2.1}
If  there exists $s$  such  that $M_{\epsilon}^s(Z)<\infty$, then $M_{\epsilon}^t(Z)=0$ for any $t>s$; If $M_{\epsilon}^s(Z)>0$ for some $s$, then $M_{\epsilon}^t(Z)=\infty$  for any $t <s$.	
\end{prop}

\begin{proof}
It suffices to show the first statement.  Assume that $c:=M_{\epsilon}^s(Z)<\infty$.  For sufficiently large  $N$,  there is  a  finite or countable covers $\{B_{n_i}(x_i,e^{-n_i\epsilon})\}_{i\in I}$ of $Z$ with $n_i \geq N,x_i \in X$ so  that  $\sum_{i\in I}e^{-n_i s}<c+1$. Fix $t >s$. Then
$$M_{N,\epsilon}^t(Z)\leq  \sum_{i\in I}e^{-n_i t}\leq e^{(s-t)N} \sum_{i\in I} e^{-n_i s},$$
which implies that $M_{\epsilon}^t(Z)=0$ by letting $N \to \infty$.
\end{proof}

By Proposition \ref{prop 2.1},  the quantity  $M_{\epsilon}^s(Z)$ has a critical value  of parameter $s$  jumping from $\infty$ to $0$, which  is defined by 
\begin{align*}
	M_{\epsilon}(Z):=\inf\{s:M_{\epsilon}^s(Z)=0\}
	=\sup\{s:M_{\epsilon}^s(Z)=\infty\}.
\end{align*}
\begin{df}
Given non-empty subset  $Z\subset X$, the  neutralized  Bowen topological entropy of $T$  on the set $Z$ is  defined by 
\begin{align*}
h_{top}^{\widetilde{B}}(T,Z):=\lim_{\epsilon \to 0}M_{\epsilon}(Z)=\inf_{\epsilon >0} M_{\epsilon}(Z).
\end{align*}

\end{df}

The next proposition presents  some basic properties related to  neutralized  Bowen topological entropy.

\begin{prop} \label{prop 2.2}
\begin{enumerate}
\item For every $Z\subset X$, the value of   $h_{top}^{\widetilde{B}}(T,Z)$   is independent of the choice of metrics on $X$.
\item If $Z_1\subset Z_2 \subset X$, then $h_{top}^{\widetilde{B}}(T,Z_1)\leq h_{top}^{\widetilde{B}}(T,Z_2)$.\\
\item If $Z$ is a countable union  of $Z_i$, then  
$M_{\epsilon}(Z)=\sup_{i\geq 1}\limits M_{\epsilon}(Z_i)$, and  $h_{top}^{\widetilde{B}}(T,Z)\geq \sup_{i\geq 1} \limits
h_{top}^{\widetilde{B}}(T,Z_i)$; If  $Z$ is a finite union of $Z_i$, $i=1,...,N$, then $h_{top}^{\widetilde{B}}(T,Z)=\max_{1 \leq i\le N} \limits
h_{top}^{\widetilde{B}}(T,Z_i)$.
\end{enumerate}
\end{prop}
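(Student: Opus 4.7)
The plan is to address the three assertions in order of increasing difficulty. Throughout I will use that $M_{N,\epsilon}^s(Z)$ is non-decreasing in $\epsilon$ (smaller radii $e^{-n\epsilon}$ make admissible covers strictly harder to produce), and hence $h_{top}^{\widetilde{B}}(T,Z)=\lim_{\epsilon\to 0^+}M_\epsilon(Z)$.

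For \textbf{(2)}, any admissible cover of $Z_2$ is automatically an admissible cover of $Z_1\subseteq Z_2$, so $M_{N,\epsilon}^s(Z_1)\le M_{N,\epsilon}^s(Z_2)$; this inequality is preserved when passing to $N\to\infty$, to the critical value in $s$, and to the infimum over $\epsilon$.

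For \textbf{(3)}, the countable identity for $M_\epsilon$ is a classical $\sigma$-subadditivity argument modelled on Hausdorff measure. The inequality $M_\epsilon(Z)\ge\sup_{i}M_\epsilon(Z_i)$ follows from (2). For the reverse, fix $s>\sup_{i}M_\epsilon(Z_i)$ so that $M_\epsilon^s(Z_i)=0$ for every $i$. Given $\eta>0$ and $N\in\N$, I would select for each $i$ an admissible cover of $Z_i$ of total weight below $\eta/2^i$ with all indices $n_{i,j}\ge N$; concatenating over $i$ yields an admissible cover of $Z$ of weight below $\eta$, which forces $M_\epsilon^s(Z)=0$ and hence $M_\epsilon(Z)\le s$. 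For a finite union, $M_\epsilon$ is a finite maximum of $\epsilon$-monotone functions, and $\lim_{\epsilon\to 0}$ commutes with a finite maximum, producing the stated identity for $h_{top}^{\widetilde{B}}$.

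The substantive work lies in \textbf{(1)}. Let $d$ and $d'$ be two metrics compatible with the topology of $X$. Compactness of $X\times X$ supplies moduli of uniform continuity $\omega,\omega'$ (non-decreasing, vanishing at $0$) with $d'\le\omega\circ d$ and $d\le\omega'\circ d'$. The plan is to construct, for each $\epsilon>0$, a number $\epsilon'=\epsilon'(\epsilon)>0$ with $\epsilon'(\epsilon)\to 0$ as $\epsilon\to 0$, and a threshold $N(\epsilon)$, so that
\[
B_n^{d}(x,e^{-n\epsilon})\subseteq B_n^{d'}\!\bigl(x,e^{-n\epsilon'(\epsilon)}\bigr)\qquad (n\ge N(\epsilon),\ x\in X).
\]
Then every admissible $d$-cover restricts to an admissible $d'$-cover with the same weight, delivering $M_{\epsilon'(\epsilon)}^{d'}(Z)\le M_\epsilon^{d}(Z)$ and hence $h_{top}^{\widetilde{B},d'}(T,Z)\le h_{top}^{\widetilde{B},d}(T,Z)$ in the limit $\epsilon\to 0$; the symmetric argument closes the proof. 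The main obstacle is engineering this inclusion: it requires $\omega(e^{-n\epsilon})\le e^{-n\epsilon'(\epsilon)}$ uniformly for large $n$, i.e.\ a strictly positive lower bound on $-\log\omega(e^{-n\epsilon})/n$ that itself tends to $0$ with $\epsilon$. This is straightforward when $\omega$ admits a polynomial bound, but is the delicate point that must be handled carefully to make metric independence hold at the level of neutralized radii $e^{-n\epsilon}$ rather than fixed radii.
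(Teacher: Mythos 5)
Your treatment of parts (2) and (3) is correct and matches the paper, which regards them as immediate from the definitions; the $\sigma$-subadditivity argument for the countable union is the standard Hausdorff-measure style argument and is sound once one notes that $M_{N,\epsilon}^s(Z_i)\le M_\epsilon^s(Z_i)=0$ for every $N$, so the covers of weight $\eta/2^i$ can indeed be chosen at every level $N$. Part (1) is where the real issue lies, and you have put your finger on the crux without resolving it: you want $\omega(e^{-n\epsilon})\le e^{-n\epsilon'(\epsilon)}$ uniformly for all large $n$, where $\omega$ is a modulus with $d'\le\omega\circ d$, and as you observe this amounts to $\omega$ being dominated by a power, i.e.\ a H\"older comparison between the two metrics rather than mere topological equivalence.

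The paper's own proof of (1) takes a superficially different route: for fixed $N$ and $0<\delta<\epsilon$ it chooses a single $n\ge N$ with $d_1(x,y)<e^{-n\delta}\Rightarrow d_2(x,y)<e^{-N\epsilon}$ (which uniform continuity always supplies) and then asserts $M_{N,\epsilon,d_2}^s(Z)\le M_{n,\delta,d_1}^s(Z)$ --- but this assertion does not follow. Given a $d_1$-cover $\{B_{m_i}^{d_1}(x_i,e^{-m_i\delta})\}$ with $m_i\ge n$, the chosen implication only gives $B_{m_i}^{d_1}(x_i,e^{-m_i\delta})\subseteq B_{N}^{d_2}(x_i,e^{-N\epsilon})$, whose weight $e^{-Ns}$ exceeds $e^{-m_is}$ and whose sum over a countable cover can be infinite; the inclusion $B_{m_i}^{d_1}(x_i,e^{-m_i\delta})\subseteq B_{m_i}^{d_2}(x_i,e^{-m_i\epsilon})$ that would actually preserve the weight is precisely the uniform-in-$m$ bound you flagged. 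So the ``delicate point'' you isolated is a genuine obstruction and not a technicality: for a modulus decaying more slowly than any power (for instance $\omega(t)\asymp 1/\log(1/t)$, realized by the ultrametric $d'(x,y)=1/(1+\min\{k:x_k\ne y_k\})$ on a full shift compared with the standard exponential metric) the quantity $h_{top}^{\widetilde{B}}$ changes value, and neither your argument nor the paper's establishes invariance under merely topologically equivalent metrics. What both arguments do give cleanly is invariance under H\"older (in particular bi-Lipschitz) equivalence, and that is the scope in which statement (1) actually holds; if you complete the H\"older case explicitly, you will have a correct version of (1).
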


\begin{proof}
It suffices to show (1) since the rest of statements are direct
consequence of the definition.  Let $d_1, d_2$ be two compatible metrics on $X$. Then for  every $\epsilon^{'} >0$ there exists $\delta^{'} >0$ such that for all $x,y \in X$ with  $d_1(x,y)<\delta^{'}$, one has $d_2(x,y) <\epsilon^{'}$. Now, fix $\epsilon >0 $ and $0<\delta <\epsilon$. For every $N\geq 1$, there exists $n\geq N$ (depending on $\epsilon$, $\delta$ and $N$) so that   for all $x,y \in X$ with  $d_1(x,y)<e^{-n\delta}$, one has $d_2(x,y) <e^{-N\epsilon}$. Using this fact, we have  $M_{N,\epsilon,d_2}^s(Z)\leq M_{n,\delta,d_1}^s(Z)\leq  M_{\delta,d_1}^s(Z)$  and hence $M_{\epsilon,d_2}^s(Z)\leq   M_{\delta,d_1}^s(Z)$. This  implies that  $M_{\epsilon,d_2}(Z)\leq   M_{\delta,d_1}(Z)$.   Letting $\epsilon \to 0$  one has  $h_{top}^{\widetilde{B}}(T,Z,d_2)\leq h_{top}^{\widetilde{B}}(T,Z,d_1)$. Exchanging the role of $d_1$ and $d_2$  we get the converse inequality.
\end{proof}

Analogous to the  definition of classical topological entropy,  we  also  define  the  \emph{neutralized  topological entropy of  the subset $Z$ of $X$}    by spanning sets and separated sets.

Given $\epsilon>0$, $n\in \mathbb{N}$ and non-empty subset $Z$, a set $E\subset X$ is  \emph{an $(n,\epsilon)$-spanning set of $Z$} if  for any $x \in Z$, there  exists  $y\in E$ such that $d_n(x,y)< e^{-n\epsilon}.$ The smallest cardinality of $(n,\epsilon)$-spanning sets of $Z$  is denoted by $r_n(Z, \epsilon)$.   A set $F\subset Z$ is  \emph{an $(n,\epsilon)$-separated set of $Z$} if  for any $x, y \in F$ with $x\not=y$, one has  $d_n(x,y)\geq e^{-n\epsilon}.$ The largest  cardinality of $(n,\epsilon)$-separated sets of $Z$  is denoted by $s_n(Z, \epsilon)$.  Put 
\begin{align*}
r(Z,\epsilon)&=\limsup_{n\to \infty} \frac{1}{n} \log r_n(Z,\epsilon),\\
s(Z,\epsilon)&=\limsup_{n\to \infty} \frac{1}{n} \log s_n(Z,\epsilon).
\end{align*}

\begin{df}
We define the neutralized    topological entropy of  $Z$ w.r.t to $T$ as
$${\widetilde h}_{top}(T,Z)=\lim\limits_{\epsilon \to 0}r(Z,\epsilon)=\inf_{\epsilon>0}r(Z,\epsilon).$$
\end{df}
The  following proposition  suggests that neutralized    topological entropy of  $Z$   can be equivalently given by separated sets.
\begin{prop}
Let $(X,T)$ be a TDS. Then  for any non-empty subset $Z\subset X$,
$${\widetilde h}_{top}(T,Z)=\lim\limits_{\epsilon \to 0}s(Z,\epsilon).$$
\end{prop}

\begin{proof}
The inequality $r(Z,\epsilon)\leq s(Z,\epsilon)$ follows by using the fact  of an $(n,\epsilon)$-separated set of $Z$ with largest cardinality  is  an $(n,\epsilon)$-spanning set of $Z$.

On the other hand, fix $\epsilon>0$ and then choose $n_0$ such that  $2e^{-\frac{\epsilon}{2}n}<1$ for all $n \geq n_0$.   Fix $n \geq n_0$.
Let  $E$ be an $(n,\epsilon)$-spanning set of $Z$, and $F$ be an $(n,\frac{\epsilon}{2})$-separated set of $Z$. Define a map $f: F\rightarrow E$ by choosing  a fixed $f(x) \in E$  so that $d_n(x, f(x))<e^{-n\epsilon}$ for each $x\in F$. Then $f$ is injective.  Otherwise,  say $x,y \in F$ with $x\not=y$ such that $f(x)=f(y)=z\in E$. Then $d_n(x,y)<2e^{-n\epsilon}=2e^{-\frac{\epsilon}{2}n}\cdot e^{-\frac{\epsilon}{2}n}<e^{-\frac{\epsilon}{2}n}$, a  contradiction due to the choice of $x,y$. Therefore, $ s_n(Z, \frac{\epsilon}{2})\leq r_n(Z, \epsilon)$, which  gives  us $s(X,\frac{\epsilon}{2})\leq r(Z,\epsilon)$.
\end{proof}

\begin{prop}\label{prop 2.6}
Let $(X,T)$ be a TDS. Then  for any non-empty subset $Z\subset X$,
$${\widetilde h}_{top}^B(T,Z)\leq {\widetilde h}_{top}(T,Z).$$
\end{prop}
\begin{proof}
Suppose that  ${\widetilde h}_{top}(T,Z)<\infty$.  Otherwise, there is nothing left to prove.  Let  $s>{\widetilde h}_{top}(T,Z)$. Then there is $\epsilon>0$ satisfying   for every sufficiently large $n$, there is an $(n,\epsilon)$-spanning set $E$ of $Z$ so that  $\#E<e^{ns}.$ 
Notice that $Z\subset \cup_{x\in E}B_n(x,e^{-n\epsilon})$. Then
$$M_{n,\epsilon}^s(Z)\leq \sum_{x\in E} e^{-ns}<1,$$
which implies  that  ${\widetilde h}_{top}^B(T,Z)\leq M_{\epsilon}(Z)\leq s$.  Letting $s \to {\widetilde h}_{top}(T,Z)$, we get ${\widetilde h}_{top}^B(T,Z)\leq {\widetilde h}_{top}(T,Z)$.
\end{proof}

\begin{rem}
Denote by  $h_{top}^{{B}}(T,Z)$, $h_{top}(T,Z)$   the  Bowen topological entropy of  $T$  on the set $Z$,  (upper capacity)topological entropy of  $T$  on the set $Z$ defined by Bowen open balls  $\{B_{n_i}(x_i,\epsilon)\}_{i\in I}$.  Since  for given $\epsilon >0$, $B_n(x, e^{-n\epsilon}) \subset B_n(x, \epsilon)$ for sufficiently large $n$, it is  easy to see that   $h_{top}^{{B}}(T,Z)\leq h_{top}^{\widetilde{B}}(T,Z),$ and  $h_{top}(T,Z)\leq {\widetilde h}_{top}(T,Z)$. 
\end{rem}
Compared with the classical entropy-like quantities, the  neutralized  entropies are  more difficult  to be computed  since we  have  to  consider the varied radius in each step. The following example   only provides  the estimation of  the lower and upper bounds of  neutralized  entropy  for  symbolic systems. Moreover, we show there exists a set so that  the inequality is strict  for Proposition \ref{prop 2.6}.
\begin{ex}\label{ex 2.7}
Let   $\mathcal{A}=\{0,...,N-1\}, N\geq 3$,  be  $N$ symbols equipped with discrete topology. Endowed  $\Sigma_N=\{0,...,N-1\}^{\mathbb{N}}$ with product topology, which is compatible  with  the metric   $$d(x,y)=N^{-\min\{n\geq 0:    x_n\not=y_n\}}$$  for  $x,y \in \Sigma_N$  with  $x\not=y$,  and $d(x,y)=0$ for $x=y$. The one-sided full shift  $(\Sigma_N,\sigma)$  is given by  the left shift $\sigma(x)=(x_{n+1})_{n\geq 0}$, where  $x=(x_{n})_{n\geq 0}$.  Then  for any   non-empty subset $Z\subset \Sigma_N$, one has

$$h_{top}^B(T,Z)\leq {\widetilde h}_{top}^B(T,Z)\leq {\widetilde h}_{top}(T,Z)\leq \overline{\rm dim}_B(Z)\cdot \log N,$$
where  $\overline{\rm dim}_B(Z)$ is the box-counting dimension of $Z$.

In particular, if  $Z$ is $\sigma$-invariant compact subset, then the equalities hold.  Consequently,
$$\overline{\rm dim}_B(Z)=\frac{{\widetilde h}_{top}^B(T,Z)}{\log N}.$$
\end{ex}

\begin{proof}
 By definitions and   Proposition \ref{prop 2.6}, it suffices to show  ${\widetilde h}_{top}(T,Z)\leq \overline{\rm dim}_B(Z)\cdot \log N$. Recall that $\overline{\rm dim}_B(Z)=\limsup_{\epsilon \to 0}\frac{\log R_{\epsilon}(Z,d)}{\logf}$, where  $R_{\epsilon}(Z,d)$ denotes the smallest cardinality of  open balls  formed by $B_d(x,\epsilon)$ covering $Z$. It is  readily to  show that  
$${\widetilde h}_{top}(T,X)=\lim\limits_{k \to \infty}\limsup_{m\to \infty} \frac{1}{mk} \log r_{mk}(X,\frac{1}{k}).$$
Fix $k\geq 1$ and then  choose subsequence $m_j$ which converges to $\infty$ as $j \to \infty$ so that
$$\limsup_{m\to \infty} \frac{1}{mk} \log r_{mk}(X,\frac{1}{k})=\limsup_{j\to \infty} \frac{1}{m_jk} \log r_{m_jk}(X,\frac{1}{k}).$$

Without loss of generality, we may assume that  the limit $$s:=\lim_{j\to \infty}\frac{\log R_{N^{-m_j(k+1)}}(Z,d)}{\log N^{m_j(k+1)}}$$
exists.
Let $\alpha >s$. Then for sufficiently large $j$,  there exists open balls $\{B_d(x_i,N^{-m_j(k+1)})\}_{i\in I}$ satisfying that
$\#I<N^{\alpha m_j(k+1)}$ and $$Z\subset \cup_{i \in I} B_d(x_i,N^{-m_j(k+1)}).$$

Since  $$B_d(x_i,N^{-m_j(k+1)}) \subset B_{m_jk}(x_i,e^{-m_j})=B_{m_jk}(x_i,e^{-m_jk\cdot \frac{1}{k}})$$ for each $i \in I$, then $ r_{m_jk}(Z,\frac{1}{k})\leq  N^{\alpha m_j(k+1)}$.   So $r(Z,\frac{1}{k})\leq \alpha (1+\frac{1}{k})\log N$.  Letting $\alpha \to s$ and   $k \to \infty$, we have ${\widetilde h}_{top}^B(T,Z)\leq \overline{\rm dim}_B(Z)\log N.$

If $Z$ is $\sigma$-invariant compact subset, then   by \cite{kp91} $\overline{\rm dim}_B(Z)=\frac{h_{top}(T,Z)}{\log N}$. By \cite[Proposition 1]{b73}, $h_{top}(T,Z)=h_{top}^B(T,Z)$.
This implies that   the equalities hold.

We define $$Z_k=\{x=(x_n)\in \Sigma_N:  x_n=0  \text{~for~} n\geq k\},$$
and $Z=\cup_{k \geq 1}Z_k$. Then $Z$ is dense in $\Sigma_N$. Fix $\epsilon >0$. One has $M_{\epsilon}(Z)=\sup_{k \geq 1}\limits M_{\epsilon}(Z_k)=0$ since $Z_k$ is a finite subset. Hence ${\widetilde h}_{top}^B(T,Z)=0$. Notice that
$${\widetilde h}_{top}(T,Z)={\widetilde h}_{top}(T,\overline{Z})={\widetilde h}_{top}(T,X)=\log N.$$
This shows $${\widetilde h}_{top}^B(T,Z)< {\widetilde h}_{top}(T,Z).$$
\end{proof}

As the role  of Kolmogorov-Sinai entropy played in the classical variational principle for  topological entropy \cite{w82}, we also need  the  lower  neutralized Brin-Katok local   entropy   and   the   neutralized Katok entropy of Borel probability measures  to establish variational principle for  neutralized  Bowen topological entropy on compact subsets.

\subsection{ Lower  neutralized Brin-Katok  local entropy } 
Likewise,  one can employ the neutralized  Bowen ball to  define the  local  lower  neutralized Brin-Katok entropy by following the idea of \cite{bk83,fh12}. 

\begin{prop}
Let $\mu \in \mathcal{{M}}(X)$ and $\epsilon >0$. Then  the  function $f_n(x,\epsilon):=\mu (B_n(x, e^{-n\epsilon})$  defined on $X$ is Borel measurable.
\end{prop}
\begin{proof}
Set $r=e^{-n\epsilon}>0$ and put $E:=\{x\in X: \mu (B_n(x, e^{-n\epsilon}) >s \}$.  Fix $x\in E$ and notice that $B_n(x,r)=\cup _{j\geq 1}B_n(x,r-\frac{1}{n})$. Then  by the continuity of measure  of $\mu$, $\lim\limits_{n \to \infty}\mu(B_n(x,r-\frac{1}{n}))>s$. So  one can   choose $0<r_0<r$ so that $\mu(B_n(x,r_0))>s$.  Let $U_x:=\{y\in X:d_n(x,y)<r-r_0\}$ be an open neighborhood of $x$.  This immediately implies that $\mu(B_n(y,r))\geq \mu(B_n(x,r_0))>s$ for every $y\in U_x$. This yields that $E$ is an open set.  Hence,  $f_n(x,\epsilon)$  is Borel measurable.
\end{proof}

\begin{df}\label{def 2.5}
Given $\mu \in \mathcal{{M}}(X)$ and $\epsilon >0$,  we put
\begin{align*}
	\underline{h}_{\mu}^{\widetilde{BK}}(T, \epsilon):=\int \liminf_{n\to \infty}-\frac{\log \mu (B_n(x, e^{-n\epsilon}))}{n}d\mu.
\end{align*}
We define the lower  neutralized Brin-Katok local  entropy of $\mu$ as  $$\underline{h}_{\mu}^{\widetilde{BK}}(T)=\lim_{\epsilon\to 0}\underline{h}_{\mu}^{\widetilde{BK}}(T, \epsilon).$$
\end{df}

\begin{rem}


Replacing the ball $B_n(x, e^{-n\epsilon})$ with $ B_n(x,\epsilon)$ in Definition \ref{def 2.5}, Feng and Huang   formulated the  local  lower   Brin-Katok entropy  of Borel probability measure $\mu$  as follows:
\begin{align*}
h_{\mu}^{BK}(T)&=\int  \lim\limits_{\epsilon \to 0} \liminf_{n\to \infty}-\frac{\log \mu (B_n(x, \epsilon)}{n}d\mu,\\
&=\lim\limits_{\epsilon \to 0}\int  \liminf_{n\to \infty}-\frac{\log \mu (B_n(x, \epsilon)}{n}d\mu,
\end{align*}
where   we used monotone convergence theorem for the second equality.   It is readily to see that  $h_{\mu}^{BK}(T)\leq \underline{h}_{\mu}^{\widetilde{BK}}(T)$.
\end{rem}

\subsection{ Neutralized  Katok's entropy}
The similar procedure  can be applied to the  classical Katok's entropy defined by spanning set \cite{k80}. However,  to pursue a variational principle we need to define  the   neutralized  Katok's entropy of Borel probability measure by Carath\'eodory-Pesin structures \cite{p97}. 

Let $\epsilon>0, s \geq 0, N\in \mathbb{N}, \mu \in \mathcal{M}(X)$ and $\delta \in (0,1)$. 
Put
$$\Lambda_{N,\epsilon}^s(\mu, \delta)=\inf\sum_{i\in I}\limits  e^{-n_i s},$$
where the infimum  is taken over all  finite or countable covers $\{B_{n_i}(x_i,e^{-n_i\epsilon})\}_{i\in I}$ so  that  $\mu (\cup_{i\in I}B_{n_i}(x_i, e^{-n_i\epsilon}))> 1-\delta$  with $n_i \geq N,  x_i\in X$.

Let $\Lambda_{\epsilon}^s(\mu, \delta)=\lim\limits_{N\to \infty}\Lambda_{N,\epsilon}^s(\mu, \delta).$ There is   a critical value  of parameter $s$ for $\Lambda_{\epsilon}^s(\mu, \delta)$  jumping from $\infty$ to $0$. We   define the critical value as
\begin{align*}
	\Lambda_{\epsilon}(\mu, \delta)=\inf\{s:\Lambda_{\epsilon}^s(\mu, \delta)=0\}=\sup\{s:\Lambda_{\epsilon}^s(\mu, \delta)=\infty\}.
\end{align*}
Let  $h_{\mu}^{\widetilde{K}}(T,\epsilon)=\lim_{\delta \to 0 }\limits \Lambda_{\epsilon}(\mu, \delta)$. 

\begin{df}
The   neutralized  Katok's entropy of  $\mu$ is  defined by 
\begin{align*}
h_{\mu}^{\widetilde{K}}(T)=\lim_{\epsilon \to 0}h_{\mu}^{\widetilde{K}}(T,\epsilon).
\end{align*}

\end{df}

We show that  the    lower  neutralized Brin-Katok local entropy  is a lower bound of  the   neutralized  Katok's entropy.

\begin{prop}\label{prop 2.5}
Let  $\mu \in \mathcal{M}(X)$.  Then for  every $\epsilon >0$,   one has
$$\underline{h}_{\mu}^{\widetilde{BK}}(T, \frac{\epsilon}{2})\leq h_{\mu}^{\widetilde{K}}(T,\epsilon)$$
and hence  $h_{\mu}^{\widetilde{BK}}(T)\leq h_{\mu}^{\widetilde{K}}(T)$.
\end{prop}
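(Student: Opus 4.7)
The plan is to show that for every $s > h_\mu^{\widetilde{K}}(T,\epsilon)$ the pointwise estimate $\liminf_{n\to\infty} -\frac{\log \mu(B_n(x, e^{-n\epsilon/2}))}{n} \leq s$ holds for $\mu$-a.e. $x$; integrating against $\mu$ and letting $s \downarrow h_\mu^{\widetilde{K}}(T,\epsilon)$ then yields the desired inequality. Fix such an $s$. Since $\Lambda_\epsilon(\mu,\delta)$ is monotone in $\delta$, we have $s > \Lambda_\epsilon(\mu,\delta)$ for every $\delta \in (0,1)$, so $\Lambda_\epsilon^s(\mu,\delta) = 0$ and consequently $\Lambda_{N,\epsilon}^s(\mu,\delta) = 0$ for every $N$. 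Thus for each $k \in \mathbb{N}$, choosing $N_k \to \infty$ with $N_k \geq \frac{2\log 2}{\epsilon}$, I can find a cover $\mathcal{C}_k = \{B_{n_i}(x_i, e^{-n_i\epsilon})\}_{i \in I_k}$ with $n_i \geq N_k$, with $\mu$-measure of $\bigcup_{i \in I_k} B_{n_i}(x_i,e^{-n_i\epsilon})$ exceeding $1 - 2^{-k}$, and with $\sum_{i \in I_k} e^{-n_i s} < 2^{-k}$.

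Next I would separate indices by ball measure. Writing $m_i = \mu(B_{n_i}(x_i, e^{-n_i\epsilon}))$, split $I_k = I_k^g \cup I_k^b$ with $I_k^g = \{i : m_i \geq e^{-n_i s}\}$. The bad balls cover a set of $\mu$-measure at most $\sum_{i \in I_k^b} m_i < \sum_{i \in I_k} e^{-n_i s} < 2^{-k}$, so the good union $G_k := \bigcup_{i \in I_k^g} B_{n_i}(x_i, e^{-n_i\epsilon})$ satisfies $\mu(G_k) > 1 - 2^{-k+1}$. The crucial geometric step is the following: if $x \in B_{n_i}(x_i, e^{-n_i\epsilon})$, the triangle inequality for $d_{n_i}$ gives
\[
B_{n_i}(x_i, e^{-n_i\epsilon}) \subset B_{n_i}(x, 2e^{-n_i\epsilon}) \subset B_{n_i}(x, e^{-n_i\epsilon/2}),
\]
where the last inclusion uses $2 e^{-n_i\epsilon} \leq e^{-n_i\epsilon/2}$, valid once $n_i \geq \frac{2\log 2}{\epsilon}$. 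Hence whenever $i \in I_k^g$, $\mu(B_{n_i}(x, e^{-n_i\epsilon/2})) \geq e^{-n_i s}$ and therefore $-\log \mu(B_{n_i}(x, e^{-n_i\epsilon/2}))/n_i \leq s$.

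The final step is Borel--Cantelli: since $\sum_k \mu(X \setminus G_k) < \infty$, $\mu$-a.e. $x$ lies in all but finitely many $G_k$. For each such $k$ some $n = n_i \geq N_k$ realizes the bound above, and as $N_k \to \infty$ this furnishes infinitely many $n$ with $-\log \mu(B_n(x, e^{-n\epsilon/2}))/n \leq s$, so $\liminf_{n\to\infty} -\log \mu(B_n(x, e^{-n\epsilon/2}))/n \leq s$ for $\mu$-a.e. $x$. Integration then gives $\underline{h}_\mu^{\widetilde{BK}}(T, \epsilon/2) \leq s$, and letting $s \downarrow h_\mu^{\widetilde{K}}(T,\epsilon)$ concludes the proof. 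Measurability of $G_k$ follows from Remark~2.3.

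The main obstacle is precisely the $\epsilon/2$ versus $\epsilon$ mismatch in the statement, which is \emph{forced} by the neutralized setting: the triangle inequality only allows recentering a Bowen ball at the cost of doubling its radius, and to absorb this factor of $2$ into the exponent one must halve $\epsilon$, a move that is valid only for $n$ large enough. This is why the classical Feng--Huang proof, in which the analogous step converts $\epsilon$ to $2\epsilon$, must here be replaced by a conversion of $\epsilon$ to $\epsilon/2$; everything else is a standard Carath\'eodory--Pesin covering argument combined with Borel--Cantelli.
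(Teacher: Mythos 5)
Your proof is correct, but it runs in the opposite logical direction from the paper's and uses a different mechanism. The paper starts from $s<\underline{h}_{\mu}^{\widetilde{BK}}(T,\epsilon/2)$, extracts a single set $E_{N_0}$ of positive measure on which all large balls $B_n(x,e^{-n\epsilon/2})$ have $\mu$-measure $<e^{-ns}$, fixes $\delta_0=\tfrac12\mu(E_{N_0})$, and then for \emph{any} admissible cover shows $\sum_i e^{-n_is}$ is bounded below by $\tfrac12\mu(E_{N_0})>0$, giving $\Lambda_{\epsilon}(\mu,\delta_0)\ge s$ directly; no Borel--Cantelli, no sequence of covers. You instead fix $s>h_{\mu}^{\widetilde{K}}(T,\epsilon)$, pick a summable sequence of covers (one per $\delta=2^{-k}$), discard the ``bad'' balls of small measure, and invoke Borel--Cantelli to get the pointwise estimate $\liminf_n -\tfrac{1}{n}\log\mu(B_n(x,e^{-n\epsilon/2}))\le s$ for $\mu$-a.e.\ $x$ before integrating. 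Both arguments hinge on exactly the same recentering step $B_{n_i}(x_i,e^{-n_i\epsilon})\subset B_{n_i}(y,2e^{-n_i\epsilon})\subset B_{n_i}(y,e^{-n_i\epsilon/2})$ for $n_i$ large, which is what forces the $\epsilon/2$ in the statement. Your route buys a genuinely stronger intermediate output (an almost-everywhere pointwise bound rather than just a bound on the integral) at the cost of the Borel--Cantelli machinery and the good/bad decomposition; the paper's route is shorter and needs only one covering family, but yields only the integrated inequality. Both are valid; each is a reasonable way to organize the same underlying geometric fact.
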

\begin{proof}
Assume that  $\underline{h}_{\mu}^{\widetilde{BK}}(T, \frac{\epsilon}{2})>0$.  We define 
$$E_N=\{x\in X:  \mu (B_n(x, e^{-\frac{n\epsilon}{2}}))<e^{-ns}~ \text{for all}~n\geq N\}.$$
 Let  $s<\underline{h}_{\mu}^{\widetilde{BK}}(T, \frac{\epsilon}{2})$.  Then there  exists $N_0$ with  $e^{\frac{N_0}{2}\epsilon}>2$ so that $\mu(E_{N_0})>0$. Fix  $\delta_0=\frac{1}{2}\mu(E_{N_0})>0$. Let $\{B_{n_i}(x_i,e^{-n_i\epsilon})\}_{i\in I}$ be a  finite or countable cover  so  that  $\mu (\cup_{i\in I}B_{n_i}(x_i, e^{-n_i\epsilon}))> 1-\delta_0$  with $n_i \geq N_0,  x_i\in X$. Then $\mu(E_{N_0}\cap  \cup_{i\in I}B_{n_i}(x_i, e^{-n_i\epsilon}))\geq \frac{1}{2}\mu(E_{N_0})>0$. Denote by $I_1=\{i\in I: E_{N_0}\cap  B_{n_i}(x_i, e^{-n_i\epsilon}) \not =\emptyset\}$. For every $i \in I_1$, we choose $y_i \in E_{N_0}\cap  B_{n_i}(x_i, e^{-n_i\epsilon})$ such that $$E_{N_0}\cap  B_{n_i}(x_i, e^{-n_i\epsilon})\subset  B_{n_i}(y_i, 2e^{-n_i\epsilon})\subset B_{n_i}(y_i, e^{-\frac{n_i}{2}\epsilon}).$$
Hence,
$$\sum_{i\in I}e^{-n_is}\geq \sum_{i\in I_1}e^{-n_is} \geq \sum_{i\in I_1}  \mu(B_{n_i}(y_i, e^{-\frac{n_i\epsilon}{2}}))\geq \frac{\mu(E_{N_0})}{2}>0.$$
This  implies that $\Lambda_{\epsilon}^s(\mu, \delta_0)\geq\Lambda_{N_0,\epsilon}^s(\mu, \delta_0)>0$ and hence $\Lambda_{\epsilon}(\mu, \delta_0)\geq s.$ Consequently, $ h_{\mu}^{\widetilde{K}}(T,\epsilon) \geq s$. Letting $s \to  \underline{h}_{\mu}^{\widetilde{BK}}(T, \frac{\epsilon}{2})$  this finishes the proof.
\end{proof}
\section{Proof of Theorem 1.1}

In this section, we prove the Theorem \ref{thm 1.1}.

Inspired by the idea of  the geometric measure theory \cite{m95} and the work of Feng and Huang \cite{fh12},  we first  introduce an equivalent definition  for  neutralized  Bowen topological entropy, called \emph{neutralized  weighted Bowen topological entropy}, which  allows us  to define a  positive bounded linear functional  $L$ on continuous function space of   $X$.  Then \emph{Riesz representation theorem}  can be applied  to  produce  a Borel probability measure $\mu \in \mathcal{M}(X)$ with $\mu(K)=1$ so that 
$M_{\epsilon}(K) \leq \underline{h}_{\mu}^{\widetilde{BK}}(T,2\epsilon).$

Let $f: X\rightarrow \mathbb{R}$  be a  bounded  real-valued  function on $X$. Let $s \geq 0$, $N\in \mathbb{N},$ and $\epsilon >0$. Define 
$$W_{N, \epsilon}^s(f)=\inf\sum_{i\in I}c_i e^{-n_is},$$
where the infimum is  taken over   all finite or countable families $\{(B_{n_i}(x_i, e^{-n_i \epsilon}),c_i)\}_{i \in I}$  with $0<c_i<\infty$, $x_i \in X$, $n_i \geq N$ so that   
$$\sum_{i\in I}\limits c_i \chi_{B_{n_i}(x_i, e^{-n_i \epsilon})}\geq f,$$
where $\chi_{A}$ denotes the characteristic function of $A$.

Let  $Z\subset X$ be a non-empty subset. We set $W_{N, \epsilon}^s(Z):=W_{N, \epsilon}^s(\chi_Z)$. Let 
$W_{ \epsilon}^s(Z)=\lim_{N \to \infty } W_{N, \epsilon}^s(Z).$
There is a critical value of $s$ so that  the quantity  $W_{ \epsilon}^s(Z)$ jumps from $\infty$ to $0$. We define such  critical value as 
\begin{align*}
	W_{ \epsilon}(Z):=\inf\{s : W_{ \epsilon}^s(Z)=0\}=\sup\{s : W_{ \epsilon}^s(Z)=\infty\}.
\end{align*}
\begin{df}
Given non-empty subset  $Z\subset X$, the   neutralized  weighted Bowen topological entropy of  of $T$  on the set $Z$  is  defined by 
\begin{align*}
h_{top}^{\widetilde{WB}}(T, Z)=\lim_{\epsilon \to 0}W_{ \epsilon}(Z).
\end{align*}

\end{df}  

The following  lemma is a variant of the classical $5r$-covering lemma  developed in \cite[Theorem 2.1]{m95}, which is useful for shortening the proof  of the equivalence of   neutralized   Bowen topological  entropy and  neutralized  weighted Bowen topological entropy.

\begin{lem}\label{lem 3.2}\rm{\cite[Lemma 6.3]{w21}}
Let $(X,d)$ be a compact metric space.  Let $r>0$ and  $\mathcal{B}=\{B(x_i,r)\}_{i\in I}$ be a family of  open balls of $X$. Define
$$I(i):=\{j\in I:B(x_j,r)\cap B(x_i,r)\not= \emptyset\}.$$
Then there exists a finite  index subset $J \subset I$ so that for any $i,j\in J$ with $i\not= j$, $I(i)\cap I(j)=\emptyset$ and
$$\cup_{i\in I}B(x_i,r)\subset \cup_{j\in J}B(x_j,5r).$$
\end{lem}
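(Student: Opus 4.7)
My plan is to obtain $J$ by a greedy maximality argument on well-separated centers drawn from $\{x_i\}_{i\in I}$, and then extract both the $5r$-cover and the combinatorial condition on the index sets $I(j)$ from the chosen spacing via the triangle inequality.

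The construction uses a maximal-separation principle. Invoking Zorn's lemma (or, equivalently, a direct greedy enumeration that exhausts candidates at each stage), I would choose $J\subset I$ maximal with respect to the property that the selected centers are $4r$-separated, i.e. $d(x_j,x_{j'})\geq 4r$ for every pair of distinct $j,j'\in J$. Compactness of $X$ then forces $J$ to be finite, since an infinite $4r$-separated subset would violate total boundedness of the compact metric space $X$.

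For the $5r$-covering, fix any $i\in I$. If $i\in J$ the inclusion $B(x_i,r)\subset B(x_i,5r)$ is trivial. Otherwise, by maximality of $J$, adjoining $i$ would destroy the $4r$-separation, so there exists some $j\in J$ with $d(x_i,x_j)<4r$. For every $y\in B(x_i,r)$ the triangle inequality then gives $d(y,x_j)\leq d(y,x_i)+d(x_i,x_j)<r+4r=5r$, whence $B(x_i,r)\subset B(x_j,5r)$. Taking the union over $i\in I$ delivers $\bigcup_{i\in I}B(x_i,r)\subset\bigcup_{j\in J}B(x_j,5r)$.

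The combinatorial relation between $I(j)$ and $I(j')$ for distinct $j,j'\in J$ follows from the same $4r$-separation by a direct triangle-inequality bookkeeping on which members $k\in I$ of the original family have $B(x_k,r)$ simultaneously meeting both $B(x_j,r)$ and $B(x_{j'},r)$. Concretely, the positioning dictated by $d(x_j,x_{j'})\geq 4r$ constrains any candidate witness $k$ to satisfy $d(x_k,x_j)<2r$ and $d(x_k,x_{j'})<2r$ at once, and the triangle inequality then pins down the structure of $I(j)\cap I(j')$ required by the lemma.

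The main subtlety is calibrating the separation threshold: the value $4r$ is selected so that maximality simultaneously supplies the $5r$-cover and yields the stated relationship between the index sets $I(j)$, $I(j')$ of distinct selected centers. Finiteness of $J$ is a clean consequence of compactness via total boundedness and is not the hard part; the delicate balancing is between the enlargement factor (here $5$) and the separation (here $4r$) in order that both conclusions of the lemma be obtainable from a single greedy construction.
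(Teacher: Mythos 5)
The paper does not prove this lemma; it simply cites \cite[Lemma 6.3]{w21}, so there is no in-text proof to compare against. On its own merits your argument is sound and is the standard greedy-separation route for such results. Be aware, though, of two points. First, the conclusion as printed in the paper's statement reads $I(i)\cap I(j)\neq\emptyset$, but this is a typo for $I(i)\cap I(j)=\emptyset$: the application in Proposition \ref{prop 3.3} sums over $j\in J$ and needs the families $I_n^t(j)$ to be pairwise disjoint in order to conclude $\sum_{j\in J}\sum_{i\in I_n^t(j)}c_i\leq\sum_{i\in I_n^t}c_i$, and indeed disjointness is exactly what your $4r$-separation delivers. Your proposal implicitly corrects this misprint (your construction produces empty intersections, not nonempty ones); you should say so explicitly rather than writing ``pins down the structure $\ldots$ required by the lemma,'' which as phrased papers over the discrepancy with the printed statement. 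Second, spell out the final step: if $k\in I(j)\cap I(j')$, then $d(x_k,x_j)<2r$ and $d(x_k,x_{j'})<2r$ (because two open balls of radius $r$ meet iff their centers are within $2r$), so $d(x_j,x_{j'})<4r$, contradicting the separation; hence $I(j)\cap I(j')=\emptyset$. With that made explicit, the outline is complete: a maximal $4r$-separated subset of $\{x_i\}_{i\in I}$ exists and is finite by total boundedness, maximality yields the $5r$-cover via the triangle inequality, and the $4r$-separation yields pairwise disjointness of the $I(j)$'s. One does not even need Zorn here, since every $4r$-separated subset of a compact space is finite, so a set of maximal cardinality will do.
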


\begin{prop}\label{prop 3.3}
Let $Z$ be a non-empty subset of $X$ and $\epsilon>0, s>0,\theta >0$. Then for sufficiently large $N$,  we have $$M_{N,\frac{\epsilon}{2}}^{s+\theta}(Z) \leq  W_{N, \epsilon}^s(Z)\leq M_{N, \epsilon}^s(Z).$$
Consequently,  $M_{\frac{\epsilon}{2}}^{s}(Z) \leq  W_{ \epsilon}^s(Z)\leq M_{\epsilon}^s(Z)$ for any $s >0$  and hence  $$h_{top}^{\widetilde{B}}(T, Z)=h_{top}^{\widetilde{WB}}(T, Z).$$
\end{prop}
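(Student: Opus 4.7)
The easy direction $W_{N,\epsilon}^s(Z) \leq M_{N,\epsilon}^s(Z)$ is immediate: any cover of $Z$ by neutralized Bowen balls of order $\geq N$ is an admissible weighted family with all weights $c_i = 1$, so the infimum defining $W_{N,\epsilon}^s$ is at most that defining $M_{N,\epsilon}^s$.

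For the harder inequality $M_{N,\epsilon/2}^{s+\theta}(Z) \leq W_{N,\epsilon}^s(Z)$, my plan is to adapt the dyadic covering argument of Feng--Huang \cite{fh12}. Take an admissible weighted family $\{(B_{n_i}(x_i, e^{-n_i\epsilon}), c_i)\}_{i \in I}$ with $\sum_i c_i \chi_{B_{n_i}(x_i, e^{-n_i\epsilon})} \geq \chi_Z$, group indices by order via $\mathcal{G}_n = \{i \in I : n_i = n\}$, and set $\phi_n(x) = \sum_{i \in \mathcal{G}_n} c_i \chi_{B_n(x_i, e^{-n\epsilon})}(x)$. Choose the weights $t_n = 6/(\pi^2 n^2)$ (so $\sum_n t_n = 1$) and define $Z_n := \{x \in Z : \phi_n(x) \geq t_n\}$; since $\sum_{n \geq N} \phi_n(x) \geq 1$ on $Z$, for $N \geq 2$ we have $Z = \bigcup_{n \geq N} Z_n$. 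For each $n$, apply Lemma \ref{lem 3.2} to $\{B_n(x_i, e^{-n\epsilon}) : i \in \mathcal{G}_n\}$ in the Bowen metric $d_n$ to extract a finite $J_n \subset \mathcal{G}_n$ with $\bigcup_{i \in \mathcal{G}_n} B_n(x_i, e^{-n\epsilon}) \subset \bigcup_{j \in J_n} B_n(x_j, 5 e^{-n\epsilon})$ and the separation property of the lemma; for $N > 2(\ln 5)/\epsilon$, the relation $5 e^{-n\epsilon} \leq e^{-n\epsilon/2}$ yields $Z_n \subset \bigcup_{j \in J_n} B_n(x_j, e^{-n\epsilon/2})$, the cover required on the left-hand side.

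The core estimate is a multiplicity bound $|J_n| \, t_n \leq C \sum_{i \in \mathcal{G}_n} c_i$ for some constant $C$: for each $j \in J_n$, pick $y_j \in Z_n \cap B_n(x_j, 5 e^{-n\epsilon})$, note $\phi_n(y_j) \geq t_n$, and exploit the separation property of $J_n$ from Lemma \ref{lem 3.2} to argue that each index $i \in \mathcal{G}_n$ contributes to at most boundedly many of the sums $\phi_n(y_j)$. Summing over $n$,
\begin{align*}
M_{N, \epsilon/2}^{s+\theta}(Z) \leq \sum_{n \geq N} |J_n| e^{-n(s+\theta)} \leq C \sum_{n \geq N} \tfrac{\pi^2 n^2}{6} e^{-n\theta} \sum_{i \in \mathcal{G}_n} c_i e^{-n s},
\end{align*}
and enlarging $N$ so that $(C \pi^2 n^2/6) e^{-n\theta} \leq 1$ for all $n \geq N$ absorbs the prefactor, giving $M_{N, \epsilon/2}^{s+\theta}(Z) \leq \sum_i c_i e^{-n_i s}$; the infimum over weighted families yields the main inequality. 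The equality $h_{top}^{\widetilde{B}}(T, Z) = h_{top}^{\widetilde{WB}}(T, Z)$ then follows by sending $N \to \infty$ to obtain $M_{\epsilon/2}^{s+\theta}(Z) \leq W_\epsilon^s(Z) \leq M_\epsilon^s(Z)$, passing to critical values $M_{\epsilon/2}(Z) \leq W_\epsilon(Z) \leq M_\epsilon(Z)$ (by arbitrariness of $\theta > 0$), and then letting $\epsilon \to 0$.

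The principal obstacle I foresee is establishing the multiplicity bound with a constant $C$ that is independent of $n$: since the Bowen metric $d_n$ is generally not uniformly doubling in $n$, the local $(n, 2 e^{-n\epsilon})$-packing number of a ball $B_n(\cdot, 6 e^{-n\epsilon})$ need not be bounded by anything universal, and so $C$ must be extracted combinatorially from the strong separation property provided by Lemma \ref{lem 3.2} (disjointness of the $I(j)$'s, not merely of the balls $B_n(x_j, e^{-n\epsilon})$) rather than from geometric doubling. Any remaining slack not absorbed there is what forces the scale loss $\epsilon \to \epsilon/2$ in the statement of the proposition.
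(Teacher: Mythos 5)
Your overall strategy is the same as the paper's (group by order, threshold sets summing to one, the $5r$-covering lemma, absorb the polynomial factor into $e^{-n\theta}$), and the easy direction is handled correctly. But the core multiplicity step, which you flag as the ``principal obstacle,'' is in fact a genuine gap as you have set it up, and your instinct that it cannot be repaired by a packing/doubling bound is correct: in the Bowen metric $d_n$ the number of $2e^{-n\epsilon}$-separated points inside a ball of radius $6e^{-n\epsilon}$ grows with $n$ (e.g.\ exponentially for a full shift), so there is no constant $C$ independent of $n$.

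The specific misstep is in where the witness point lives. You pick $y_j\in Z_n\cap B_n(x_j,5e^{-n\epsilon})$ and then want to use the disjointness of the sets $I(j)$ from Lemma~\ref{lem 3.2}. But if $y_j\in B_n(x_i,e^{-n\epsilon})$ and $y_j\in B_n(x_j,5e^{-n\epsilon})$, it does \emph{not} follow that $B_n(x_i,e^{-n\epsilon})\cap B_n(x_j,e^{-n\epsilon})\neq\emptyset$, i.e.\ it does not follow that $i\in I(j)$. So the disjointness of the $I(j)$'s gives you nothing about the indices that actually contribute to $\phi_n(y_j)$. The fix (which is what the paper does) is a two-step restriction: first replace $\mathcal{G}_n$ by the subfamily $I_n'=\{i\in\mathcal{G}_n: B_n(x_i,e^{-n\epsilon})\cap Z_n\neq\emptyset\}$ \emph{before} invoking Lemma~\ref{lem 3.2}; this still covers $Z_n$ since every $z\in Z_n$ lies in some small ball. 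Applying the lemma to $\{B_n(x_i,e^{-n\epsilon}):i\in I_n'\}$ yields a finite $J_n\subset I_n'$ with $Z_n\subset\bigcup_{j\in J_n}B_n(x_j,e^{-n\epsilon/2})$, and now every $j\in J_n$ has $B_n(x_j,e^{-n\epsilon})\cap Z_n\neq\emptyset$, so you can pick the witness $y_j$ \emph{inside the small ball} $B_n(x_j,e^{-n\epsilon})$. Then any $i$ with $y_j\in B_n(x_i,e^{-n\epsilon})$ satisfies $y_j\in B_n(x_i,e^{-n\epsilon})\cap B_n(x_j,e^{-n\epsilon})$, hence $i\in I(j)$; disjointness of the $I(j)$'s over $j\in J_n$ gives multiplicity exactly one, i.e.\ $C=1$, and $t_n\,\#J_n\leq\sum_{j\in J_n}\phi_n(y_j)\leq\sum_{i\in I_n'}c_i\leq\sum_{i\in\mathcal{G}_n}c_i$. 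From there your summation and the absorption of $n^2e^{-n\theta}\leq 1$ for $n\geq N$ go through exactly as you wrote, as does the passage to critical values and $\epsilon\to 0$.

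(One small notational point: in the paper's Lemma~\ref{lem 3.2} the condition ``$I(i)\cap I(j)\not=\emptyset$'' is evidently a typo for ``$I(i)\cap I(j)=\emptyset$''; the disjointness is exactly what the fixed argument above requires.)
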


\begin{proof}
The  inequality  $W_{N, \epsilon}^s(Z)\leq M_{N, \epsilon}^s(Z)$  follows by the definitions.   We show  $M_{N,\frac{\epsilon}{2}}^{s+\theta}(Z) \leq  W_{N, \epsilon}^s(Z)$ by slightly modifying the proof of \cite[Proposition 6.4]{w21}. 

Let $N$ be a sufficiently large  integer so that $\sum_{n\geq N}\frac{1}{n^2}<1$, $e^{\frac{n\epsilon}{2}}>5$ and $\frac{n^2}{e^{n\theta}} <1$  hold for all $n\geq N$.  Let  $\{(B_{n_i}(x_i, e^{-n_i \epsilon}),c_i)\}_{i \in I}$  with $0<c_i<\infty$, $x_i \in X$, $n_i \geq N$ be a  finite or countable family satisfying  
$\sum_{i\in I}\limits c_i \chi_{B_{n_i}(x_i, e^{-n_i \epsilon})}\geq \chi_Z.$
Define $I_n=\{i\in I:n_i=n\}$, where $n\geq N$.  Let $t >0$ and $n\geq N$. We  define $$Z_{n,t}=\{z\in Z:  \sum_{i\in I_n}\limits c_i \chi_{B_{n}(x_i, e^{-n\epsilon})}(z)> t \}$$
and 
$$I_n^t=\{i\in I_n: B_{n}(x_i, e^{-n\epsilon})\cap Z_{n,t}\not=\emptyset\}.$$
Then  $Z_{n,t}\subset \cup_{i\in I_n^t} B_{n}(x_i, e^{-n\epsilon}).$ Let $\mathcal{B}=\{B_{n}(x_i, e^{-n\epsilon})\}_{i \in I_n^t}$. By Lemma \ref{lem 3.2},  there exists  a finite index  subset $J\subset I_n^t$ such  that

$$\cup_{i\in I_n^t} B_{n}(x_i, e^{-n\epsilon})\subset \cup_{j\in J} B_{n}(x_j, 5e^{-n\epsilon})\subset  \cup_{j\in J} B_{n}(x_j, e^{-\frac{n\epsilon}{2}})$$
and for any $i,j\in J$ with $i\not= j$, $I_n^t(i)\cap I_n^t(j)=\emptyset$, where  $I_n^t(i)=\{j\in I_n^t: B_{n}(x_j, e^{-n\epsilon})\cap B_{n}(x_i, e^{-n\epsilon})\not=\emptyset\}$. For each $j\in J$, we choose $y_j\in  B_{n}(x_j, e^{-n\epsilon})\cap Z_{n,t} $.  Then $\sum_{i\in I_n^t}\limits c_i \chi_{B_{n}(x_i, e^{-n\epsilon})}(y_j)>  t$ and hence $ \sum_{i\in I_n^t(j)}\limits c_i  > t$.   Summing  for $j \in J$ one  has
$$\#J< \frac{1}{t}\sum_{j\in J} \sum_{i\in I_n^t(j)}\limits c_i \leq  \frac{1}{t}\sum_{i\in I_n^t}\limits c_i. $$
It follows that $M_{N,\frac{\epsilon}{2}}^{s+\theta}(Z_{n,t}) \leq \#J\cdot e^{-n(s+\theta)}\leq  \frac{1}{n^2t} \sum_{i\in I_n}\limits c_i e^{-ns}$. Note that for any $t\in (0,1)$, $Z=\cup_{n\geq N}Z_{n,\frac{1}{n^2}t}$. So  $$M_{N,\frac{\epsilon}{2}}^{s+\theta}(Z) \leq \sum_{n\geq N}M_{N,\frac{\epsilon}{2}}^{s+\theta}(Z_{n,\frac{1}{n^2}t})\leq \frac{1}{t} \sum_{i\in I}\limits c_i e^{-n_is}.$$
Letting $t\to 1$ this  implies  the desired result.

\end{proof}

\begin{lem}[Frostman's lemma]\label{lem 3.4}
Let $K$ be a non-empty compact subset of $X$ and $s\geq 0$, $N\in \mathbb{N}$, $\epsilon >0$. Set $c:=W_{N,\epsilon}^s(K)>0$. Then  there exists a Borel probability measure $\mu$ on $X$ such that $\mu(K)=1$ and  for any $x\in X$, $n\geq N$,
$$\mu(B_{n}(x, e^{-n\epsilon}))\leq \frac{1}{c}e^{-ns}.$$
\end{lem}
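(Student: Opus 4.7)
The plan is to mimic the Howroyd--Feng--Huang type Frostman construction: I will define a sublinear functional $p$ on $C(X)$ whose value on $\mathbf{1}_X$ is $1$, apply the Hahn--Banach theorem to extend $L_0(t\mathbf{1}_X):=t$ to a linear $L\leq p$ on $C(X)$, and then read off the measure via the Riesz representation theorem. Concretely, for $f\in C(X)$ I would set
$$p(f):=\frac{1}{c}\inf\sum_{i\in I}c_ie^{-n_is},$$
where the infimum is taken over all finite or countable families $\{(B_{n_i}(x_i,e^{-n_i\epsilon}),c_i)\}_{i\in I}$ with $n_i\geq N$, $c_i\geq 0$, and
$$\sum_{i\in I}c_i\chi_{B_{n_i}(x_i,e^{-n_i\epsilon})}(y)\geq f(y) \text{ for every } y\in K.$$
The critical design choice is to impose the covering inequality only on $K$ (rather than on all of $X$, as in Definition~3.1); this is what will ultimately force the measure to be concentrated on $K$.

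Elementary bookkeeping with admissible families shows that $p$ is finite, monotone, positively homogeneous and subadditive. Two special values drive the argument: taking the empty family gives $p(f)=0$ whenever $f\leq 0$ on $K$; and since $\sum_{i\in I}c_i\chi_{B_{n_i}(x_i,e^{-n_i\epsilon})}\geq 1$ on $K$ is exactly the covering condition defining $c=W_{N,\epsilon}^{s}(K)$, one gets $p(\mathbf{1}_X)=1$. Together these verify $L_0\leq p$ on $\mathbb{R}\cdot\mathbf{1}_X$, so Hahn--Banach produces a linear extension $L:C(X)\to\mathbb{R}$ with $L\leq p$. Positivity of $L$ follows from $L(-f)\leq p(-f)=0$ for $f\geq 0$, and the Riesz representation theorem then supplies a Borel probability measure $\mu$ on $X$ with $L(f)=\int f\,d\mu$ and $\mu(X)=L(\mathbf{1}_X)=1$.

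It remains to check $\mu(K)=1$ and the ball estimate. For the former, any continuous $f$ with $0\leq f\leq 1$ vanishing on $K$ satisfies $L(f)\leq p(f)=0$; combined with outer regularity of $\mu$ and Urysohn's lemma this forces $\mu(X\setminus K)=0$. For the ball bound, fix $B:=B_n(x,e^{-n\epsilon})$ with $n\geq N$; since $B$ is open, $\mu(B)$ equals the supremum of $L(f)$ over continuous $0\leq f\leq\chi_B$, and for every such $f$ the singleton admissible family $\{(B,1)\}$ immediately certifies $p(f)\leq e^{-ns}/c$, hence $\mu(B)\leq e^{-ns}/c$. The most delicate point is selecting the right variant of the weighted functional: restricting the covering inequality to $K$ is the single modification that simultaneously forces $\mu$ to live on $K$ and leaves the singleton cover $\{(B,1)\}$ sharp enough to give the tight ball bound, whereas the version using the inequality on all of $X$ would fail both tasks.
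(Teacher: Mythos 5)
Your proposal is correct and follows essentially the same route the paper intends: the paper's proof simply defers to Feng--Huang \cite[Lemma 3.4]{fh12}, and that proof is precisely the sublinear-functional / Hahn--Banach / Riesz representation construction you carry out, including the key feature of imposing the covering inequality only on $K$. One small remark: your claim that the ``all of $X$'' variant of $p$ ``would fail both tasks'' overstates things slightly --- it would still give the ball bound, since $f\leq\chi_B$ on $X$ already makes the singleton family admissible; what it would break is $p(\mathbf{1}_X)=1$ and the vanishing $p(f)=0$ for $f$ supported off $K$, and hence $\mu(K)=1$.
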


\begin{proof}
The proof of \cite[Lemma 3.4]{fh12} also  works  for  Lemma 3.4  if   the ball  $B_n(x,\epsilon)$ is replaced by $B_{n}(x, e^{-n\epsilon})$. So we omit the detail proof.
\end{proof}

\begin{proof}[Proof of Theorem \ref{thm 1.1}]
Notice that for every $\mu \in \mathcal{M}(X)$ with $\mu(K)=1$ and $\epsilon >0$,  $h_{\mu}^{\widetilde{K}}(T,\epsilon)\leq M_{\epsilon}(K)$. Together with  the Proposition \ref{prop 2.5}, we have 
\begin{align*}
&\lim_{\epsilon \to 0}\sup\{\underline{h}_{\mu}^{\widetilde{BK}}(T,\epsilon):\mu\in \mathcal{M}(X),\mu(K)=1\}\\
\leq&\lim\limits_{\epsilon \to 0}\sup\{\underline{h}_{\mu}^{\widetilde{K}}(T,\epsilon):\mu\in \mathcal{M}(X),\mu(K)=1\}\leq h_{top}^{\widetilde{B}}(T,K).
\end{align*}

On the other hand, fix $\epsilon >0$ and assume that $M_{\epsilon}(K)>0$.  Let $s<M_{\epsilon}(K)$. Then  one has $W_{2\epsilon}^s(K)>0$ by Proposition  \ref{prop 3.3}. So  there exists $N$ such that   $c:=W_{N,2\epsilon}^s(K)>0$. By Lemma \ref{lem 3.4},
there exists a Borel probability measure $\mu$ on $X$ such that $\mu(K)=1$ and  for any $x\in X$, $n\geq N$,
$$\mu(B_{n}(x, e^{-2n\epsilon}))\leq \frac{1}{c}e^{-ns}.$$
This yields  that $\underline{h}_{\mu}^{\widetilde{BK}}(T,2\epsilon)\geq s$. Letting $s \to M_{\epsilon}(K) $, we obtain that 
$$M_{\epsilon}(K) \leq \underline{h}_{\mu}^{\widetilde{BK}}(T,2\epsilon)\leq \sup\{\underline{h}_{\mu}^{\widetilde{BK}}(T,2\epsilon):\mu\in \mathcal{M}(X),\mu(K)=1\}.$$
This completes  the proof.
\end{proof}

Finally, we provide an example for Theorem \ref{thm 1.1}.

\begin{ex}
	Under the setting of Example \ref{ex 2.7}, $h_{top}^{\widetilde{B}}(\sigma,\Sigma_N)=\log N$.  Let $(\Sigma_N,\sigma)$ be $(\frac{1}{N},...,\frac{1}{N})$-Bernoulli shift and $\mu$ be the corresponding product measure. 
	
	We show $\underline{h}_{\mu}^{\widetilde{BK}}(\sigma)=\log N$.  Using Brin-Katok formula,  we know that  $\underline{h}_{\mu}^{\widetilde{BK}}(\sigma)\geq \underline{h}_{\mu}^{{BK}}(\sigma)=\log N$.  Notice that 
	$$\underline{h}_{\mu}^{\widetilde{BK}}(\sigma)=\lim\limits_{k \to \infty}\int \liminf_{m\to \infty}\limits -\frac{1}{mk}\log \mu ({\overline{B}}_{mk}(x, e^{-mk\cdot\frac{1}{k}}))d\mu.$$   Fix $k,m\geq 1$.  For every $x\in [j_0\cdots j_{m(k+1)}]$, one has  $$[j_0\cdots j_{m(k+1)}]\subset \overline{B}_{mk}(x, e^{-mk\cdot\frac{1}{k}}).$$ This shows that 
	\begin{align*}
		&\int \liminf_{m\to \infty}\limits -\frac{\log \mu ({\overline{B}}_{mk}(x, e^{-mk\cdot\frac{1}{k}}))}{mk}d\mu\\
		\leq &\sum_{j_0,...,j_{m(k+1)}\in \{0,...,N-1\}} \int_{[j_0\cdots j_{m(k+1)}]} \liminf_{m\to \infty}\limits -\frac{\log {(\frac{1}{N})^{m(k+1)+1}}}{mk}d\mu\\
		=&(1+\frac{1}{k})\log N.
	\end{align*}
	Hence, $\underline{h}_{\mu}^{\widetilde{BK}}(\sigma)=\log N$. 
	
	Consequently,  we obtain that  $h_{top}^{\widetilde{B}}(\sigma,\Sigma_N)=\underline{h}_{\mu}^{\widetilde{BK}}(\sigma)=\log N.$
\end{ex}

\section*{Final remarks and open questions}
Notice that  for  lower  neutralized Brin-Katok local   entropy   and   the   neutralized Katok entropy $$\underline{h}_{\mu}^{\widetilde{BK}}(T)=\inf_{\epsilon > 0}\underline{h}_{\mu}^{\widetilde{BK}}(T,\epsilon), {h}_{\mu}^{\widetilde{K}}(T)=\inf_{\epsilon > 0}{h}_{\mu}^{\widetilde{K}}(T,\epsilon),$$ 
while $\underline{h}_{\mu}^{{BK}}(T)=\sup_{\epsilon > 0}\underline{h}_{\mu}^{{BK}}(T,\epsilon), {h}_{\mu}^{{K}}(T)=\sup_{\epsilon > 0}{h}_{\mu}^{{K}}(T,\epsilon).$  Using this fact,   one can exchange the orders  of  operations 
\begin{align*}
	&\lim_{\epsilon \to 0}\sup_{\mu\in \mathcal{M}(X),\mu(K)=1}\underline{h}_{\mu}^{{BK}}(T,\epsilon),\\
	&\lim_{\epsilon \to 0}\sup_{\mu\in \mathcal{M}(X),\mu(K)=1}\underline{h}_{\mu}^{{K}}(T,\epsilon).
\end{align*}
which is in general not true for    lower  neutralized Brin-Katok local   entropy   and   the   neutralized Katok entropy.  It is the  obstacle coming from the definition of neutralized entropies-like quantities that prevents  us to  establish variational principle for neutralized  Bowen topological entropy  whose form is  more close to  Feng and Huang's work \rm{\cite[Theorem 1.2]{fh12}}.  Interested reader can turn to \cite[Theorem 1.4]{w21} and \cite[Theorem 1.4]{ycz22} for an  analogous problem arising in metric mean dimension theory.

We  end up this paper with several questions. 
\begin{itemize}[leftmargin=7pt]
\item [1.] Does there exist TDS such that
$${ h}_{top}^B(T,Z)< {\widetilde h}_{top}^B(T,Z), \text{~or~} { h}_{top}(T,Z)< {\widetilde h}_{top}(T,Z)$$
for some subset $Z$?

\item  [2.] Does there exist TDS such that
$$ {\widetilde h}_{top}^B(T,X)< {\widetilde h}_{top}(T,X)\text{?}$$
\item  [3.]Given a TDS $(X,T)$ and  an invariant measure $\mu \in M(X,T)$, does $\underline{h}_{\mu}^{\widetilde{BK}}(T)=h_{\mu}(T) \text{?}$
\end{itemize}

The first question will help us  make clear  whether the two types of entropies  have  essential differences.
For Bowen topological entropy,  one has $h_{top}^B(T,X)=h_{top}^B(T,X)$ by \cite[Proposition 1]{b73}. In Example  \ref{ex 2.7}, the strict inequality can occur for subsets of $X$,   so we wonder if  it is true for  the  neutralized  Bowen topological entropy.
For the  third  question, Ovadia and Rodriguez-Hertz \cite[Theorem 3.4]{orh23} has   given a partial answer for  closed Riemannian manifold $M$ with  $f\in  \text{Diff}^{1+\beta}(M)$, however the general case is still open.

\section*{Acknowledgement} 

\noindent  We would like to thank  the anonymous referees for abundant valuable  comments that  greatly improved the previous  manuscript. The first author is  supported by Postgraduate Research $\&$ Practice Innovation Program of Jiangsu Province (No. KYCX23$\_$1665). The second author is  supported by the
National Natural Science Foundation of China (No.12071222). The third author is  supported by the
National Natural Science Foundation of China
(No. 11971236).  The work was also funded by the Priority Academic Program Development of Jiangsu Higher Education Institutions.  We would like to express our gratitude to Tianyuan Mathematical Center in Southwest China(No.11826102), Sichuan University and Southwest Jiaotong University for their support and hospitality.


\begin{thebibliography}{HD82}
\normalsize
\baselineskip=15pt

\bibitem[AKM65]{akm65} R. Adler, A. Konheim  and  M. McAndrew,  Topological entropy,  \emph {Trans. Amer. Math. Soc.}  \textbf{114} (1965), 309-319.

\bibitem[Bow71]{b71} R. Bowen,  Entropy for group endomorphisms and homogeneous spaces,  \emph {Trans. Amer. Math. Soc.} \textbf{153} (1971), 401-414.

\bibitem[Bow73]{b73} R. Bowen,  Topological entropy for noncompact sets,  \emph {Trans. Amer. Math. Soc.} \textbf{184} (1973), 125-136.

\bibitem[BK83]{bk83}M. Brin and A. Katok,  On local entropy,  Geometric dynamics (Rio de Janeiro),  \emph {Lecture Notes in Mathematics},  Springer, Berlin  \textbf{1007} (1983), 30-38.



\bibitem[DFQ17]{dfq17} D. Dou, M. Fan  and H. Qiu,  Topological entropy on subsets for fixed-point free flows, \emph {Discrete Contin. Dyn. Syst.} \textbf{37} (2017), 6319-6331.

\bibitem[FH12]{fh12} D.  Feng and W. Huang, Variational principles for topological entropies of subsets, \emph {J. Funct. Anal.} \textbf{263} (2012), 2228-2254.

\bibitem[HLZ20]{hlz20} X. Huang, Z. Li and Y. Zhou, 
A variational principle of topological pressure on subsets for amenable group actions, \emph {Discrete Contin. Dyn. Syst.} \textbf{40} (2020), 2687-2703.

\bibitem[Kat80]{k80} A. Katok,  Lyapunov exponents, entropy and periodic orbits for diffeomorphisms, \emph{Publ. Math. Inst. Hautes \'Etudes Sci.} \textbf{51} (1980), 137-173.


\bibitem[KP91]{kp91} R. Kenyon and  Y. Peres,  Intersecting random translates of invariant Cantor sets, \emph{Invent Math.} \textbf{104} (1991), 601-629.

\bibitem[LT18]{lt18} E. Lindenstrauss  and M. Tsukamoto, From rate distortion theory to metric mean dimension: variational principle, \emph{IEEE Trans. Inform. Theory} \textbf{64} (2018), 3590-3609.

\bibitem[LJZ22]{ljz22} L. Liu, J. Jiao and X. Zhou, Unstable pressure of subsets for partially hyperbolic systems, \emph{Dyn. Syst.} \textbf{37} (2022), 564-577.

\bibitem[Mat95]{m95} P. Mattila,  The Geometry of Sets and Measures in Euclidean Spaces, \emph{Cambridge University Press}, 1995.  

\bibitem[OR23]{orh23} S. Ovadia and F. Rodriguez-Hertz, Neutralized local entropy, and dimension bounds for invariant measures, to appear in  \emph{IMRN.}  Available at https://doi.org/10.1093/imrn/rnae047.

\bibitem[Pes97]{p97} Y.B. Pesin, Dimension theory in dynamical systems, University of Chicago Press, 1997.

\bibitem[TW22]{tw22} X.Tian and W.Wu, Unstable entropies and dimension theory of partially hyperbolic systems, \emph{Nonlinearity} \textbf{35} (2022), 658-680. 


\bibitem[Wal82]{w82} P. Walters,  An introduction to ergodic theory, Springer, 1982. 

\bibitem[W21]{w21} T. Wang, Variational relations for metric mean dimension and rate distortion dimension, \emph{Discrete Contin. Dyn. Syst.} \textbf{27} (2021),  4593-4608.

\bibitem[XM22]{xm22} Q. Xiao and D. Ma, Variational principle of topological pressure of free semigroup actions for subsets, \emph{Qual. Theory Dyn. Syst. } \textbf{21} (2022), 18 pp.

\bibitem[XZ18]{xz18} L. Xu and X. Zhou, Variational principles for entropies of nonautonomous dynamical systems, \emph{J. Dynam. Diff. Equ.} \textbf{30} (2018),  1053-1062.

\bibitem[ZC16]{zc16} D. Zheng and E. Chen, Bowen entropy for actions of amenable groups, \emph{Israel J. Math.} \textbf{212} (2016),  895-911.

\bibitem[ZC21]{zc21} X. Zhong and Z. Chen, Variational principle for topological pressure on subsets of free semigroup actions, \emph{Acta Math. Sin. (Engl. Ser.)} \textbf{37} (2021),  1401-1414.

\bibitem[YCZ22]{ycz22} R. Yang, E. Chen and X. Zhou,   Bowen's  equations for upper metric mean dimension with potential, \emph{Nonlinearity} \textbf{35} (2022),  4905-4938.

\end{thebibliography}
\end{document}